\pgfplotsset{width=10cm,compat=1.9}
\numberwithin{equation}{section}
\newcommand{\udef}{\mathrel{\mathop:}=}
\newcommand{\de}{\,\mathrm{d}}
\newcommand{\norm}[1]{\left\| #1 \right\|}
\theoremstyle{plain}
\newtheorem{thm}{Theorem}[section]
\newtheorem{lm}[thm]{Lemma}
\newtheorem{rem}[thm]{Remark}
\newtheorem{exm}[thm]{Example}
\let\tilde\widetilde
\let\hat\widehat
\newcommand{\parder}[2]{\frac{\partial#1}{\partial#2}}
\renewcommand{\ss}{\scriptstyle}
\def\svdots{\vbox{\baselineskip=1.5pt\lineskiplimit=0pt
	\kern1.5pt \hbox{$\ss .$}\hbox{$\ss .$}\hbox{$\ss .$}}}
\begin{document}

\title{Convergence Analysis of a Spectral Numerical Method for a Peridynamic Formulation of Richards' Equation}

\author[Difonzo]{Fabio V. Difonzo}
\address{Istituto per le Applicazioni del Calcolo \textquotedblleft Mauro Picone\textquotedblright, Consiglio Nazionale delle Ricerche, Via G. Amendola 122/I, 70126 Bari, Italy}
\email{fabiovito.difonzo@cnr.it}

\author[Pellegrino]{Sabrina F. Pellegrino}
\address{Dipartimento di Ingegneria Elettrica e dell'Informazione, Politecnico di Bari, 70125 Bari, Italy}
\email{sabrinafrancesca.pellegrino@poliba.it}

\subjclass{52A21, 26E25}

\keywords{Richards' equation, nonlocal models, peridynamics, Chebyshev spectral methods}


\begin{abstract}
We study the implementation of a Chebyshev spectral method with forward Euler integrator proposed in~\cite{BDFP} to investigate a peridynamic nonlocal formulation of Richards' equation. We prove the convergence of the fully-discretization of the model showing the existence and uniqueness of a solution to the weak formulation of the method by using the compactness properties of the approximated solution and exploiting the stability of the numerical scheme. We further support our results through numerical simulations, using initial conditions with different order of smoothness, showing reliability and robustness of the theoretical findings presented in the paper.
\end{abstract}

\maketitle

\pagestyle{myheadings}
\thispagestyle{plain}
\markboth{F.V. DIFONZO AND S.F. PELLEGRINO}{ANALYSIS OF A SPECTRAL METHOD FOR A PERIDYNAMIC RICHARDS' EQUATION}

\section{Introduction}

Richards' equation is a prominent tool in the description of porous media phenomena, specifically dealing with water movement in unsaturated soils. It is derived by applying Darcy-Buckingham law to the law of mass conservation for an incompressible porous medium and constant liquid density. Existence and uniqueness of the original formulation of Richards' equation are due to \cite{AltLuckhaus1983} (see also \cite{merzRybka2010,berardiDifonzoEFM2020} and references therein). However, determining analytical solutions to Richards' equation is prohibitive under general setting on the constitutive relations typically used in the local formulation of the equation, and so numerical procedures are needed to provide explicitly computed solutions. As is well known, Richards' equation is a highly nonlinear, and possibly degenerate, parabolic equation, for which standard numerical schemes for parabolic equations fail to return reliable solutions. In fact, several approaches have been investigated according to the nature of soil through which water movement occurs: for homogeneous soils we refer to, among others, \cite{Berardi_Difonzo_Notarnicola_Vurro_APNUM_2019,Bergamaschi_Putti,Lai_Ogden_JHydrology_2015_1D_pred_corr,Feo2021}; for heterogeneous media several different approaches have been proposed, using piecewise smooth dynamical system tools (see \cite{Berardi_Difonzo_Vurro_Lopez_ADWR_2018,Berardi_Difonzo_Lopez_CAMWA_2020}); linear domain decomposition (see \cite{Arico_Sinagra_Tucciarelli,Seus_Mitra_Pop_Radu_Rohde_2018}); Kirchhoff transform (see \cite{BerardiDifonzoJCD2022,Suk_Park_J_Hydrology_2019}); finite element methods (see \cite{Manzini_Ferraris_ADWR_2004,Bachini2021}); formal asymptotics (see \cite{Kumar_List_Pop_Radu_JPC_2020}). As a general reference for the numerical features in Richards' equation, the interested reader is referred to the survey \cite{Farthing_Ogden_SSSAJ_2017}, whereas \cite{Paniconi_Putti_WRR_2015} frames Richards' equation into the context of hydrological modeling. \\
However, as common in diffusion phenomena through porous media, a nonlocal approach carries features and properties possibly useful for further analysis. This idea traces back to the '60s (see \cite{Rawlins_Gardner_1963}), and since then there has been an increasing interest, involving nonlocal behaviors in the hydraulic conductivity (see \cite{Guerrini_Swartzendruber_SSSAJ_1992}); fractional terms in the time derivative of water content (see \cite{Pachepsky_et_al_JoH_2003,Kavvas_et_al_HESS_2017}); or, also, using memory component in modeling water stress in the root water uptake (see \cite{Wu_BenGal_et_alAGWAT_2020,Carminati_VZJ_2012,Berardi_et_al_TiPM_2022,BerardiGirardiMemory2024}). \\
In the context of nonlocal formulations of Richards' equation, \cite{di2013nonlocal} extended the equation to incorporate nonlocal effects, providing a foundation for studying capillary flows. Later, in \cite{JabakhanjiMohtar2015}, the peridynamic paradigm has been applied to better describe the porous media and the dynamics of water therein, paving the way for a powerful approach to deal with the nonlinear terms in Richards' equation. \\
However, these nonlocal variants introduce challenges and opportunities, requiring specialized numerical schemes. In \cite{BDFP} authors propose an explicit Euler numerical scheme, based on Chebyshev spectral method, to solve a nonlocal formulation of Richards' equation. Therein and in \cite{difonzodilena2023}, several examples have been provided supporting the properties that the proposed numerical scheme should retain order 2 in space and order 1 in time, under mild smoothness assumptions on the initial conditions.

Spectral methods seem to be very efficient and accurate when applied to nonlocal peridynamic models. Indeed, they can benefit of the convolution-based definition of the integral operator and as a consequence they can exploit the properties of the Fast Fourier Transform (FFT) algorithm. Following this idea, in~\cite{CFLMP,Pellegrino2020} the authors perform a comparison between the implementation of Fourier spectral methods and quadrature formulas. However, trigonometric polynomials need to require periodic boundary conditions, so they cannot be applied alone to more general models. A way to overcome the issue is to make a volume penalization at the boundaries as in~\cite{LP,Jafarzadeh,LP2021} or to replace Fourier polynomials by Chebyshev polynomials, as in~\cite{LPcheby,LPcheby2022,LPeigenv}.

Spectral spatial discretization based on the approximation of the solution by means of a finite series of Chebyshev polynomials is suitable to incorporate Dirichlet boundary conditions and allows to get a high-order accuracy when applied to the nonlocal peridynamic formulation of Richards' equation (see, for instance,~\cite{BDFP}).

The convergence analysis of a specific numerical scheme tailored for the nonlocal variant is the focus of this paper, building upon state-of-the-art techniques in numerical analysis, mesh-free methods, and adaptive discretization strategies (see also~\cite{ALEBRAHIM2023109710}).

The remaining of the paper is structured as follows.
In Section~\ref{sec:nonlocRichards} we present the model, its spatial discretization and we recall the convergence result for the semi-discrete scheme. Section~\ref{sec:fully} is devoted to the deduction of the fully spectral discretization of the model and provide a rigorous proof of its convergence to a weak solution to the proposed nonlocal Richards' model. Section~\ref{sec:tests} provides some numerical simulations and finally Section~\ref{sec:conclusions} concludes the paper.


\section{A nonlocal formulation of Richards' equation based on peridynamics}
\label{sec:nonlocRichards}

We consider the following peridynamic formulation of Richards' equation with Dirichlet boundary conditions proposed in~\cite{JabakhanjiMohtar2015}
\begin{equation}\label{eq:model}
\begin{cases}
\parder{\theta}{t} (z,t)= \int_{B_\delta(z)}\frac{\varphi(z'-z)}{|z'-z|}\frac{K(z)+K(z')}{2}[H(z')-H(z)]\,\de z'+S(z), &\,\ z\in[-1,1],\,t\in[0,T] \\
\theta(z,0) = \theta^{0}(z),&\,\ z\in[-1,1],  \\
\theta(-1,t) = \theta_{0}(t),&\,\ t\in[0,T],  \\
\theta(1,t) = \theta_{Z}(t),&\,\  t\in[0,T],
\end{cases}
\end{equation}
where $\theta$ represents the {\em water content}, $K$ is the {\em hydraulic conductivity function}, $H$ is the {\em hydraulic potential}, which is related to the {\em matric head} $h_m$ by $H(z,t)=h_m(z,t)+z$, and, finally, $S$ is the {\em root uptake term}.

Let also
\begin{equation}
\label{eq:L}
\mathcal{L}\left(\theta(z,t)\right)=\int_{B_\delta(z)}\frac{\varphi(z'-z)}{|z'-z|}\frac{K(z)+K(z')}{2}[H(z')-H(z)]\,\de z'
\end{equation}
denote the peridynamic integral operator in~\eqref{eq:model}. It represents the nonlocal counterpart of the diffusivity term, as it takes into account long-range interactions between water particles (see~\cite{SILLING2000,OTERKUS2014}). The length of such interactions is parameterized by the positive scalar value $\delta$ called {\em horizon}. Due to the absence of partial spatial derivatives, the model is able to remain consistent even in presence of singularities and, therefore, it can incorporate desiccation cracks. Additionally, the function $\varphi$ is the so-called {\em influence function} and represents the convolution kernel of the model, which operates as the weight of the discrete mean value of the spatial interactions.

The behavior of this function strongly defines the profile of the solution and its dispersive effects. In particular, in~\cite{BDFP}, in order to allow the boundary conditions to be effective in the model, the authors define a {\em distributed influence function} in the following way (see Figure~\ref{fig:vphi})
\begin{equation}
\label{eq:distributedInfluenceFunction}
\varphi_\delta(z)\udef
\begin{cases}
\frac{|z|-1+\delta}{\delta}, & |z|\geq1-\delta, \\
0, & |z|<1-\delta.
\end{cases}
\end{equation}

\begin{figure}
    \centering
    \begin{tikzpicture}[
declare function={
    func(\x)= and(\x > -1+0.15, \x<1-0.15) * (0)
    +
    or(\x <= -1+0.15, \x >= 1-0.15) * ((abs(\x)-1+0.15)/0.15) ;
  }
]
\begin{axis}[
    width = .55\linewidth,
    axis lines = left,
    xlabel = \(z\),
    ylabel = {\(\varphi_\delta(z)\)},
]

\addplot [
    domain=-1:1,
    samples=200,
    color=black,
] {func(x)};
\end{axis}
\end{tikzpicture}
    \caption{The distributed influence function $\varphi_{\delta}(z)$.}
    \label{fig:vphi}
\end{figure}
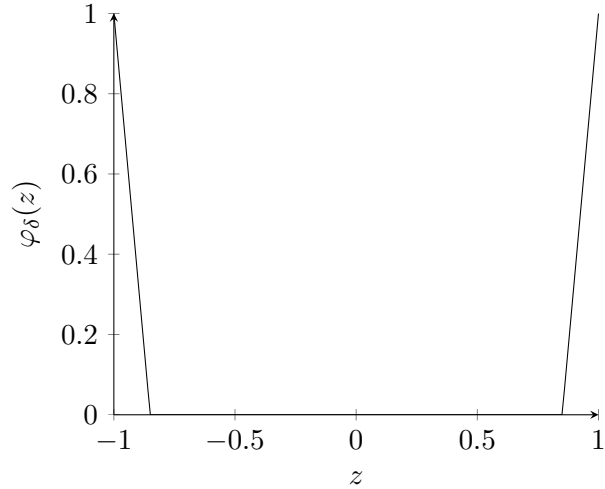

Due to the nonlinearity of the model, a numerical approach is needed in order to study the properties of the solution. In particular, in~\cite{BDFP} the model is discretized by using Chebyshev spectral collocation scheme for spatial discretization with forward Euler method for the time marching. Moreover, the authors prove the convergence of the semi-discrete method by projecting the approximated solution into the space of Chebyshev polynomials and exploiting the Lipschitz continuity of the peridynamic operator $\mathcal{L}$ in~\eqref{eq:L}.

Additionally, the authors show numerically the convergence of the fully-discrete scheme without providing a rigorous proof. The aim of this work is to complete the analysis adding the proof of the convergence of the fully-discrete scheme showing the compactness and stability properties of the approximated solution.

In what follows, we recall the construction of the spectral method for the spatial discretization and its  convergence. We refer the reader to~\cite{BDFP} for more details. Moreover, we provide a brief review of the functional spaces and of the projection operator we will use in the next section to prove the convergence of the fully-discrete method.

Let $N>0$, and $z_h\udef\cos(h\pi/N)$, for $h=0,\dots,N$ be a partition of the spatial domain $[-1,1]$ obtained by using the non-uniform Chebyshev-Gauss-Lobatto (CGL) collocation points.
\begin{rem}\label{rem:affineTransformation}
    The choice to take $[-1,1]$ as spatial domain is to simplify the computations; however, more general intervals can be considered by applying an affine transformation.
\end{rem}
We look for an approximation of the solution to~\eqref{eq:model} in the following form
\begin{equation}
\label{eq:Chebyexpansion}
\theta^N(z,t)=\sum_{k=0}^N \bar{\theta}_k(t) T_k(z),
\end{equation}
where $T_k(z)$ is the $k$-th Chebyshev polynomial of the first kind, defined as $T_k(z)\udef\cos(k\arccos{z})$, which is an orthogonal polynomial with respect to the weight $w(z)=\left(\sqrt{1-z^2}\right)^{-1}$, and $\bar{\theta}_k(t)$ is the $k$-th discrete Chebyshev coefficient given by
\begin{equation}
\label{eq:Chebycoeff}
\bar{\theta}_k(t)\udef\frac{1}{\gamma_k} \sum_{h=0}^N \theta(z_h,t) T_k(z_h) w_h,
\end{equation}
where
\begin{equation}
\label{eq:gamma}
\gamma_k\udef
\begin{cases}
    \pi,\quad&k=0,\,N,\\
    \frac{\pi}{2},\quad &k=1,\dots,N-1,
\end{cases}
\end{equation}
and
\begin{equation}
\label{eq:w}
w_h\udef
\begin{cases}
    \frac{\pi}{2N},\quad &h=0,\,N,\\
    \frac{\pi}{N},\quad &h=1,\dots,N-1.
\end{cases}
\end{equation}

We set
\begin{align*}
\Lambda(z) &\udef K(z)H(z), \\
\overline{\varphi}_\delta(z) &\udef \frac{\varphi_\delta(z)}{\norm{z}},
\end{align*}
and
\[
\beta = \int_{-1}^{1}\overline{\varphi}_\delta(z)\,\de z=2\left(1+\frac{1-\delta}{\delta}\ln(1-\delta)\right).
\]

If we replace $\theta$ by $\theta^N$ into equation~\eqref{eq:model}, thanks to the Convolution Theorem, we obtain the semi-discretization of the model at each collocation point $z_h$ as follows
\begin{equation}
\label{eq:semischeme}
\begin{split}
\parder{\theta^N}{t}(z_h,t)&=\frac12\left(\mathcal{F}^{-1}\left(\mathcal{F}\left(\overline{\varphi}_\delta\right)\mathcal{F}\left(\Lambda\right)\right)(z_h)+K(z_h)\ \mathcal{F}^{-1}\left(\mathcal{F}\left(\overline{\varphi}_\delta\right)\mathcal{F}\left(H\right)\right)(z_h)\right)\\
&\quad-\frac12\left(H(z_h)\ \mathcal{F}^{-1}\left(\mathcal{F}\left(\overline{\varphi}_\delta\right)\mathcal{F}\left(K\right)\right)(z_h)+\beta\Lambda(z_h)\right)+ S(z_h),
\end{split}
\end{equation}
with initial condition
\begin{equation}
\label{eq:initcondsemischeme}
\theta^N(z_h,0)=\theta^{0,N}(z_h),\qquad h=0,\dots,N,
\end{equation}
and boundary conditions
\begin{equation}
\label{eq:BCsemischeme}
\begin{split}
    \theta^N(z_0,t)&=\theta^N_0(t),\qquad t\in[0,T],\\
    \theta^N(z_N,t)&=\theta^N_Z(t),\qquad t\in[0,T],
\end{split}
\end{equation}
where $\mathcal{F}$ and $\mathcal{F}^{-1}$ denote the discrete Chebyshev transform and the discrete inverse Chebyshev transform defined in~\eqref{eq:Chebycoeff} and~\eqref{eq:Chebyexpansion}, respectively.

In~\cite{BDFP}, the authors prove the convergence of the semi-discrete scheme~\eqref{eq:semischeme}-\eqref{eq:initcondsemischeme}-\eqref{eq:BCsemischeme} in the space of all continuous functions in the weighted Sobolev space $H^s_w\left([-1,1]\right)$, with $w(z)=\left(\sqrt{1-z^2}\right)^{-1}$ and for any $s\ge 1$. The proof makes use of the projector operator into the orthogonal space of Chebyshev polynomials and exploits the Lipschitz boundedness of $H$ and $K$.

We introduce the space of Chebyshev polynomials of degree $N$, defined as
\[
S_{N}= \text{span}\left\{T_k(z)\ |\ 0 \le z\le N\right\}\subset L^2_w([-1,1]),
\]
and the orthogonal projection operator $P_N: L^2_w([-1,1]) \to S_N$ given by
\[
P_Nu(z)= \sum_{k=0}^N \bar{u}_k T_k(x)w_k,
\]
where the weight $w_k$ is defined in~\eqref{eq:w} and is such that for any $u\in L^2_w([-1,1])$, the following equality holds
\begin{equation}
\label{eq:orthogonal}
(u-P_Nu,\varphi)_w = \int_{-1}^1 \left(u-P_Nu\right)\varphi \ w \de z =0,\quad\text{for every $\varphi\in S_N$}.
\end{equation}


Then, using~\eqref{eq:L}, the semi-discrete scheme for~\eqref{eq:semischeme}-\eqref{eq:initcondsemischeme}-\eqref{eq:BCsemischeme} can be reformulated in terms of $P_N$ as follows
\begin{gather}
\label{eq:schemePN}
\parder{\theta^N}{t} (z,t) = P_N \mathcal{L}\left(\theta^N(z,t)\right) + P_N S(z),\\
\label{eq:initial_scheme}
\theta^N(z,0) = P_N \theta^0(z),
\end{gather}
with boundary conditions
\begin{equation}
\label{eq:BC-PN}
\begin{split}
\theta^N(-1,t)&= P_N \theta_0(t),\qquad t\in[0,T]\\
\theta^N(1,t) &= P_N \theta_Z(t),\qquad t\in[0,T],
\end{split}
\end{equation}
where $\theta^{N}(z,t)\in S_N$ for every $0\le t\le T$.

We fix $s\geq1$ and define by $X_s \udef \mathcal{C}^0\left(0,T; H^s_w\left([-1,1]\right)\right)$ the space of all continuous functions in the weighted Sobolev space $H^s_w\left([-1,1]\right)$,
with norm
\[
\norm{u}_{X_s}^2 = \max_{t\in[0,T]}\norm{u(\cdot,t)}_{s,w}^2,
\]

for any $T> 0$.

From now on, we denote by $C$ a generic positive constant. There hold the following results.

\begin{lm}[{\cite[Theorem 3.1]{Canuto}}]
\label{lm:sobolev}
For any real $0\le \mu\le s$, there exists a positive constant $C$ such that
\begin{equation}
\label{eq:sobolev}
\norm{\theta-P_N\theta}_{H^\mu_{w}([-1,1])} \le \frac{C}{N^{s-\mu}}\norm{\theta}_{H^s_{w}([-1,1])}, \quad\text{for every $\theta\in H^{s}_w([-1,1])$}.
\end{equation}
\end{lm}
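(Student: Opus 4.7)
The plan is to reduce the bound to a tail estimate on the Chebyshev expansion, which is the standard argument in Canuto--Hussaini--Quarteroni--Zang. First I would represent $\theta\in L^2_w([-1,1])$ by its Chebyshev series $\theta=\sum_{k\geq 0}\hat{\theta}_k T_k$ with $\hat{\theta}_k=\gamma_k^{-1}(\theta,T_k)_w$. Since $P_N$ is the $L^2_w$-orthogonal projection onto $S_N=\operatorname{span}\{T_0,\dots,T_N\}$, it simply truncates the series, giving $\theta-P_N\theta=\sum_{k>N}\hat{\theta}_k T_k$, and Parseval yields $\|\theta-P_N\theta\|_{L^2_w}^2=\sum_{k>N}\gamma_k|\hat{\theta}_k|^2$.

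The second step is to invoke the characterization of the weighted Sobolev scale through the Chebyshev--Sturm--Liouville operator $\mathcal{A}u=-w^{-1}\bigl((1-z^2)^{1/2}u'\bigr)'$, whose eigenfunctions are precisely the $T_k$ with eigenvalues $\lambda_k=k^2$. This produces the norm equivalence
\[
\|\theta\|_{H^s_w([-1,1])}^2 \sim \sum_{k=0}^{\infty} (1+k^2)^s|\hat{\theta}_k|^2,
\]
first for integer $s$ by induction on the number of derivatives using the Chebyshev three-term recurrence, and then for fractional $s$ by Hilbert-scale interpolation between $L^2_w$ and $H^{\lceil s\rceil}_w$.

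With this characterization in hand, the lemma reduces to a monomial comparison: for every $k>N$ and $0\leq \mu\leq s$ one has $(1+k^2)^{\mu-s}\leq (1+N^2)^{\mu-s}\leq C N^{-2(s-\mu)}$, whence
\[
\|\theta-P_N\theta\|_{H^\mu_w}^2 \leq C\sum_{k>N}(1+k^2)^\mu|\hat{\theta}_k|^2 \leq \frac{C}{N^{2(s-\mu)}}\sum_{k>N}(1+k^2)^s|\hat{\theta}_k|^2 \leq \frac{C}{N^{2(s-\mu)}}\|\theta\|_{H^s_w}^2,
\]
and taking square roots yields~\eqref{eq:sobolev}. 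The main technical obstacle is not the tail estimate itself, which is elementary, but establishing the norm equivalence between $H^s_w$ and the weighted $\ell^2$ scale of Chebyshev coefficients for non-integer $s$; this requires setting up the Hilbert scale generated by $\mathcal{A}$ and interpolating between the integer rungs. Since the result is already available as Theorem~3.1 of~\cite{Canuto}, the cleanest route in the paper is to quote it directly, with the sketch above serving only to indicate the underlying spectral-theoretic mechanism.
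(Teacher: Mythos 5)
The paper does not actually prove this lemma: it is quoted verbatim as Theorem~3.1 of~\cite{Canuto}, so your closing remark --- that the cleanest route is to cite the result directly --- coincides exactly with what the authors do. The problem lies in the sketch you offer as the underlying mechanism. The reduction to a tail estimate and the identity $\theta-P_N\theta=\sum_{k>N}\hat{\theta}_kT_k$ are fine, and the one-sided inequality $\sum_k(1+k^2)^s\gamma_k|\hat{\theta}_k|^2\le C\norm{\theta}_{H^s_w}^2$ is indeed available and settles the case $\mu=0$. But the two-sided norm equivalence $\norm{\theta}_{H^\mu_w}^2\sim\sum_k(1+k^2)^\mu|\hat{\theta}_k|^2$ on which your argument rests is false for the Chebyshev weight once $\mu>0$: the Hilbert scale generated by the Sturm--Liouville operator $\mathcal{A}$ does not coincide with the weighted Sobolev scale $H^\mu_w$. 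A quick way to see the obstruction is that $T_k'=kU_{k-1}$ and $\int_{-1}^1U_{k-1}^2\,w\de z\sim k$, so $\norm{T_k}_{H^1_w}\sim k^{3/2}$, whereas the spectral characterization via $\mathcal{A}T_k=k^2T_k$ would predict $\norm{T_k}_{H^1_w}\sim k$. Each mode in the tail therefore costs more than a factor $k^\mu$ when measured in $H^\mu_w$, and your monomial comparison does not deliver the exponent $N^{\mu-s}$; this is precisely why the classical Chebyshev truncation estimates of Canuto--Quarteroni carry the less favorable exponents ($\tfrac{3}{2}\mu-s$ for $0\le\mu\le1$ and $2\mu-s-\tfrac12$ for $\mu\ge1$) rather than $\mu-s$.

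So the step you flag as ``the main technical obstacle'' --- establishing the norm equivalence for non-integer $s$ --- is not merely technical: as set up, it cannot be carried out, because the equivalence already fails at integer $s\ge1$. A correct argument must estimate the $H^\mu_w$ norm of the tail directly (accepting the weaker exponent), or work with a different projector such as the $H^1_w$-orthogonal one, for which optimal rates do hold. Since the paper only ever invokes the lemma through the citation, none of this affects the paper's logic, but your sketch should not be read as a proof of the estimate in the form stated.
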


\begin{thm}[{\cite[Theorem 4]{BDFP}}]
\label{th:convergence-semi}
Let $s\ge 1$ and $\theta(z,t)\in X_s$ be the solution to the initial-boundary-valued problem~\eqref{eq:model} and $\theta^N(z,t)$ be the solution to the semi-discrete scheme~\eqref{eq:schemePN}-\eqref{eq:initial_scheme}-\eqref{eq:BC-PN}.
Then, there exists a positive constant $C$, independent on $N$, such that
\begin{equation}
\label{eq:order_conv}
\norm{\theta-\theta^N}_{X_1} \le C(T) \left(\frac{1}{N}\right)^{s-1} \norm{\theta}_{X_s},
\end{equation}
for any initial data $\theta^0\in H^s_w([-1,1])$ and for any $T > 0$.
\end{thm}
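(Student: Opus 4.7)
My plan is the standard orthogonal-projection splitting followed by an energy/Gronwall argument on the semi-discrete equation. Decompose the error as
$$\theta - \theta^N = (\theta - P_N\theta) + (P_N\theta - \theta^N) =: \eta + \xi_N,$$
so that the first piece is the spectral projection error, estimated directly by Lemma~\ref{lm:sobolev}, and the second lies in $S_N$, which we can treat as an unknown governed by an ODE in the weighted Sobolev space. The triangle inequality in the $X_1$ norm will then reduce the theorem to bounding $\eta$ and $\xi_N$ separately.

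For $\eta$, Lemma~\ref{lm:sobolev} with $\mu = 1$ gives $\|\eta(\cdot,t)\|_{H^1_w} \le C N^{-(s-1)} \|\theta(\cdot,t)\|_{H^s_w}$, and taking the supremum over $t \in [0,T]$ already yields the desired rate on the projection piece. For $\xi_N$, I apply $P_N$ to the exact equation $\partial_t \theta = \mathcal{L}(\theta) + S$ (using that $P_N$ commutes with $\partial_t$ and reproduces $P_N S$) and subtract the semi-discrete scheme~\eqref{eq:schemePN}; since the initial condition~\eqref{eq:initial_scheme} is precisely $P_N\theta^0$, the residual satisfies
$$\partial_t \xi_N = P_N\bigl(\mathcal{L}(\theta) - \mathcal{L}(\theta^N)\bigr), \qquad \xi_N(\cdot,0) = 0.$$
Testing this identity against $\xi_N$ in $L^2_w$ and using the orthogonality~\eqref{eq:orthogonal} to strip the projector (legal because $\xi_N \in S_N$), then invoking the Lipschitz continuity of $\mathcal{L}$ together with $\|\theta - \theta^N\|_w \le \|\eta\|_w + \|\xi_N\|_w$, I obtain
$$\tfrac12 \der{}{t}\|\xi_N\|_w^2 \le C\bigl(\|\eta\|_w + \|\xi_N\|_w\bigr)\|\xi_N\|_w.$$
Young's inequality followed by Gronwall's lemma yields $\sup_t \|\xi_N(\cdot,t)\|_w \le C(T)\sup_t \|\eta(\cdot,t)\|_w$. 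To promote the estimate to the $H^1_w$ norm required by $X_1$, I would rerun the same energy argument at the $H^1_w$ level, relying on the continuity $\|P_N v\|_{H^1_w} \le C\|v\|_{H^1_w}$ (implicit in Lemma~\ref{lm:sobolev}) together with an $H^1_w$-Lipschitz estimate for $\mathcal{L}$.

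The step I expect to be the main obstacle is precisely establishing the Lipschitz bound for $\mathcal{L}$ in the weighted $H^1_w$ norm. The kernel combines the distributed influence function $\varphi_\delta$, the singular factor $|z'-z|^{-1}$, and, upon pairing in $H^1_w$, the weight $w(z) = (1-z^2)^{-1/2}$ which blows up at $z = \pm 1$; differentiating under the integral and keeping the resulting kernel integrable requires a careful use of the form of $\overline{\varphi}_\delta$ and of the regularity of $K$ and $H$. Once such a Lipschitz estimate is secured, the Gronwall step is routine and closing the triangle-inequality argument gives~\eqref{eq:order_conv} with an explicit $C(T) = C e^{CT}$.
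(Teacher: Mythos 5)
Your proposal follows essentially the same route as the paper's source for this result: the theorem is imported from~\cite{BDFP} rather than reproved here, and the paper explicitly describes that proof as resting on the orthogonal Chebyshev projector $P_N$ together with the Lipschitz boundedness of $H$ and $K$ --- precisely the splitting $\theta-\theta^N=(\theta-P_N\theta)+(P_N\theta-\theta^N)$ with Lemma~\ref{lm:sobolev} for the first piece and a Lipschitz/Gronwall energy argument for the second that you outline. No substantive difference in approach.
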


\section{Fully spectral discretization of the model}
\label{sec:fully}

Let $N_T>0$ be a positive integer and $0=t_0<t_1<\dots<t_{N_T}=T$ be a uniform partition of $[0,T]$, namely, if we set $\Delta t=T/N_T$, then $t_n=n\Delta t$, for $n=0,1,\dots,N_T$. Given an arbitrary function $\psi(t)$, we write $\psi_n$ as the value of $\psi$ at $t=n\Delta t$. The backward difference form is $d_t \psi_n=\left(\psi_n-\psi_{n-1}\right)/\Delta t$ for any sequence $\{\theta_n\}$. 

We assume that $S\in L^2_w\left([-1,1]\right)$ and that the initial condition $\theta^N_0\in H^1_{w}\left([-1,1]\right)$ is such that
\begin{equation}
\label{eq:ICassumption}
\norm{\theta^0 - \theta_0^N}_{L^2_w\left([-1,1]\right)} \le \frac{C}{N^{2-\mu}}\norm{\theta^0}_{L^2_w\left([-1,1]\right)},\quad\text{for any $0\le\mu\le 2$}.
\end{equation}
Thus, the fully-discrete spectral scheme for the model can be written as
\begin{equation}
\label{eq:fully}
\begin{cases}
\theta^N_{n}=\theta^N_{n-1} + \Delta t \left(P_N\mathcal{L}\left(\theta^N_n\right) + P_N S\right), \\
\theta^N_0=P_N \theta^0_0.
\end{cases}
\end{equation}

In this section, we prove the existence and uniqueness of the solution to~\eqref{eq:fully} and that such solution converges to the solution of the continuous model~\eqref{eq:model} as $\Delta t \to 0$ and $N\to\infty$. To do so, we prove some preliminary Lemmas.

The following result provides a nonlocal counterpart of the maximum principle for strong solution of parabolic equations.

\begin{lm}[see~\cite{maxprinc2019}]\label{lm:maxprinc}
 Let $\theta$ be a strong solution to~\eqref{eq:model} for $t\in[0,T]$. Then
 \begin{equation}
\label{eq:maxprinc}
\theta(z,t) \le e^{t/2}\norm{S}_{L^2_w\left([-1,1]\right)} + \max\left\{\sup_{z\in(-1,1)} \theta^0,\sup_{t\in(0,T]} \theta_0(t),\sup_{t\in(0,T]} \theta_Z(t)\right\},
 \end{equation}
 for any $z\in[-1,1]$ and $t\in[0,T]$.
\end{lm}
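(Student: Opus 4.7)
The plan is an energy-type argument for the positive part of $\theta - M$ combined with a Gronwall estimate that produces the exponential factor $e^{t/2}$. Let
\[
M \udef \max\left\{\sup_{z\in(-1,1)} \theta^{0}(z),\ \sup_{t\in(0,T]} \theta_{0}(t),\ \sup_{t\in(0,T]} \theta_{Z}(t)\right\},
\]
and set $u(z,t)\udef(\theta(z,t) - M)_{+}$. By the choice of $M$, $u$ vanishes at $t=0$ and at $z=\pm 1$; the lemma will follow once one controls $u$ in the $L^{\infty}$ sense by its $L^{2}_{w}$ size, and proves $\norm{u(\cdot,t)}_{L^{2}_{w}} \leq e^{t/2}\norm{S}_{L^{2}_{w}([-1,1])}$ for $t\in[0,T]$.

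The first step is to test \eqref{eq:model} against $w(z)\,u(z,t)$ and integrate over $[-1,1]$. The time derivative produces $\tfrac{1}{2}\der{}{t}\norm{u(\cdot,t)}_{L^{2}_{w}}^{2}$, while the source is bounded by $\norm{S}_{L^{2}_{w}}\norm{u(\cdot,t)}_{L^{2}_{w}}$ via Cauchy--Schwarz. The essential step is to establish
\[
\int_{-1}^{1} \mathcal{L}(\theta)(z,t)\,u(z,t)\,w(z)\,\de z \leq \tfrac{1}{2}\norm{u(\cdot,t)}_{L^{2}_{w}}^{2}.
\]
I would derive this by a symmetrization $(z,z')\leftrightarrow(z',z)$ in the double integral, exploiting the evenness of the influence kernel $\overline{\varphi}_{\delta}(z'-z)$ together with the monotonicity of the matric head $h_{m}$ as a function of $\theta$ in Richards' constitutive relation: on the support of $u$, $h_{m}(z') - h_{m}(z)$ has the correct sign to make the symmetrized integrand contribute nonpositively. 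The residual gravitational term $(z'-z)$ coming from $H = h_{m}+z$ is controlled via an odd--even cancellation against the kernel and the Lipschitz regularity of $K$, producing a bounded correction that is absorbed into $\tfrac{1}{2}\norm{u}_{L^{2}_{w}}^{2}$.

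Putting the estimates together, one obtains a scalar differential inequality of the form $\der{}{t}\norm{u(\cdot,t)}_{L^{2}_{w}} \leq \tfrac{1}{2}\norm{u(\cdot,t)}_{L^{2}_{w}} + \norm{S}_{L^{2}_{w}}$, from which Gronwall's inequality (using $u(\cdot,0)\equiv 0$) delivers the stated exponential bound, and the pointwise continuity of $\theta$ upgrades the $L^{2}_{w}$ estimate to the $L^{\infty}$ conclusion \eqref{eq:maxprinc}. Alternatively one could argue pointwise: if $v(z,t)\udef\theta(z,t)-M-e^{t/2}\norm{S}_{L^{2}_{w}}$ attained a positive interior maximum at $(z^{*},t^{*})$, then $\partial_{t}\theta(z^{*},t^{*})\geq \tfrac{1}{2}e^{t^{*}/2}\norm{S}_{L^{2}_{w}}$ while $\mathcal{L}(\theta(z^{*},t^{*}))$ is nonpositive (up to the same Lipschitz-in-$K$ error), contradicting \eqref{eq:model}.

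The principal obstacle is the symmetrization step itself: because the factor $\tfrac12(K(z)+K(z'))$ inside $\mathcal{L}$ is symmetric in $(z,z')$ but nonconstant, the standard cancellation that makes nonlocal diffusion dissipative is not automatic, and one must keep careful track of the Lipschitz-in-$z$ error of $K$ to ensure it can be absorbed rather than worsening the Gronwall constant. This delicate cancellation, together with the interplay between the monotonicity of $h_{m}(\theta)$ and the gravitational part of $H$, is precisely what the nonlocal maximum principle of \cite{maxprinc2019} codifies, and why the authors invoke it rather than reprove it here.
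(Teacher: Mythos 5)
The paper does not prove this lemma at all: it is imported verbatim from the cited reference \cite{maxprinc2019}, so there is no internal argument to compare yours against. Judged on its own terms, your proposal is a plan rather than a proof, and it has two gaps that are not merely technical. First, the main route is an energy estimate on $u=(\theta-M)_+$ in $L^2_w$, but the conclusion \eqref{eq:maxprinc} is a \emph{pointwise} bound; the claim that ``the pointwise continuity of $\theta$ upgrades the $L^2_w$ estimate to the $L^\infty$ conclusion'' is false --- an $L^2_w$ bound on $u$ controls no pointwise value of $u$, continuity or not (the inequality between $\norm{\cdot}_{L^\infty}$ and $\norm{\cdot}_{L^2_w}$ goes the wrong way). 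Second, the step you yourself identify as essential, namely
\[
\int_{-1}^{1}\mathcal{L}(\theta)\,u\,w\,\de z\le\tfrac12\norm{u}_{L^2_w}^2,
\]
is never established. The standard symmetrization $(z,z')\leftrightarrow(z',z)$ produces the factor $u(z')w(z')-u(z)w(z)$, not $u(z')-u(z)$, because the Chebyshev weight $w(z)=(1-z^2)^{-1/2}$ is not constant; so the usual sign argument pairing $h_m(z')-h_m(z)$ with $u(z')-u(z)$ does not go through as stated, and the gravitational contribution $(z'-z)$ has no sign and no obvious cancellation since $K(z')$ is not even in $z'-z$. Deferring exactly this point to \cite{maxprinc2019} means the proposal does not constitute a proof.

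Your alternative pointwise argument has a further problem: at a putative interior maximum $(z^*,t^*)$ you would need $S(z^*)\le\tfrac12 e^{t^*/2}\norm{S}_{L^2_w([-1,1])}$ to reach a contradiction, but $S$ is only assumed to lie in $L^2_w([-1,1])$, so its pointwise values are neither defined everywhere nor dominated by its $L^2_w$ norm. If you want a self-contained argument, you must either strengthen the hypotheses on $S$ (e.g.\ $S\in L^\infty$) and run a genuine pointwise maximum-principle argument in which the dissipativity of $\mathcal{L}$ at a spatial maximum is proved directly from the sign of $h_m(\theta(z'))-h_m(\theta(z^*))$ and an explicit bound on the gravity term, or else reproduce the actual argument of \cite{maxprinc2019}; as written, neither branch of your proposal closes.
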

As a consequence of Lemma~\ref{lm:maxprinc} we can assume that the water content $\theta$ in~\eqref{eq:model} is uniformly bounded.

\begin{lm}
\label{lm:LinL2}
Let $\theta_m^N(z)$ be the solution to the fully-discrete scheme~\eqref{eq:fully}. Then, $\mathcal{L}(\theta_m^N)\in L^2_w([-1,1])$.
\end{lm}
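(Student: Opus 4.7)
The approach is to establish a uniform pointwise bound on $|\mathcal{L}(\theta_m^N)(z)|$ for $z\in[-1,1]$; since the Chebyshev weight satisfies $\int_{-1}^{1}w(z)\,\de z=\pi<\infty$, any bounded function on $[-1,1]$ automatically lies in $L^2_w([-1,1])$, and the conclusion follows.

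First I would observe that $\theta_m^N\in S_N$ is a polynomial of degree at most $N$, hence continuous and bounded on the compact interval $[-1,1]$. Under the standard smoothness and boundedness hypotheses on the hydraulic constitutive functions $K(\cdot)$ and $h_m(\cdot)$ (the same ones implicit in Theorem~\ref{th:convergence-semi}, together with the uniform bound on $\theta$ supplied by Lemma~\ref{lm:maxprinc}), both $K(z)$ and $H(z)=h_m(z)+z$ are bounded and Lipschitz continuous as functions of $z$. Set $M_K\udef\norm{K}_{L^\infty([-1,1])}$ and let $L_H$ denote a Lipschitz constant for $H$ on $[-1,1]$.

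Next, starting from the defining integral in~\eqref{eq:L} and using the identity $\overline{\varphi}_\delta(z'-z)\,\abs{z'-z}=\varphi_\delta(z'-z)$ together with the two estimates
\begin{equation*}
\frac{\abs{K(z)+K(z')}}{2}\le M_K,\qquad \abs{H(z')-H(z)}\le L_H\,\abs{z'-z},
\end{equation*}
I would deduce the pointwise bound
\begin{equation*}
\abs{\mathcal{L}(\theta_m^N)(z)}\le M_K\,L_H\int_{B_\delta(z)}\varphi_\delta(z'-z)\,\de z'\le M_K\,L_H\int_{-1}^{1}\varphi_\delta(y)\,\de y\udef C,
\end{equation*}
which is finite and independent of $z$ since $\varphi_\delta$ is bounded and compactly supported. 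Squaring and integrating against $w$ yields $\norm{\mathcal{L}(\theta_m^N)}_{L^2_w([-1,1])}^{2}\le \pi C^{2}<\infty$, as required.

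The only point that really uses structure is the Lipschitz estimate on $H$: it is exactly the cancellation $\abs{H(z')-H(z)}/\abs{z'-z}\le L_H$ that absorbs the singular factor $1/\abs{z'-z}$ contained in $\overline{\varphi}_\delta$ and reduces the kernel to the integrable $\varphi_\delta$. In the present setting the influence function in fact vanishes near the origin, so the singularity is artificial and the bound is even easier, but framing the argument through Lipschitz continuity of $H$ and $K$ is what makes it robust and directly reusable in the stability and uniqueness arguments that this lemma is presumably meant to prepare.
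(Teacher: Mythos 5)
Your proof is correct and follows essentially the same route as the paper's: both arguments rest on the (local) Lipschitz continuity of $H$ and $K$ together with the structure of $\varphi_\delta$ to control the factor $1/\abs{z'-z}$ in the kernel. Your version is in fact more explicit than the paper's one-line computation --- in particular, you correctly account for the integrability of the Chebyshev weight $w$, which the paper's display omits.
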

\begin{proof}
Due to the definition of $\varphi_\delta$ in~\eqref{eq:distributedInfluenceFunction} and since $H$ and $K$ are
locally Lipschitz, using the Cauchy-Schwartz inequality, we find
\begin{align*}
\int_{-1}^1\left(\mathcal{L}(\theta_m^N)\right)^2\,\de z &= \int_{B_1(z)}\frac{\left(\varphi_\delta(z'-z)\right)^2}{\|z'-z\|^2}\frac{\left(K(z)+K(z')\right)^2}{4}\left(H(z')-H(z)\right)^2\,\de V_{z'}<\infty,
\end{align*}
and this proves the claim.
\end{proof}

We prove the following stability property.
\begin{lm}
\label{lm:stability}
Let $\theta^N_m$ be the numerical solution of~\eqref{eq:fully} for every $1\le m\le N_T$, then $\theta^N_m$ satisfies the following stability estimate
\begin{equation}
\label{eq:stability}
\sum_{n=1}^m\norm{\theta^N_{n}-\theta^N_{n-1}}_{L^2_w\left([-1,1]\right)}^2+\norm{\theta^N_m}_{L^2_w\left([-1,1]\right)}^2 + \Delta t \sum_{n=1}^m \norm{\mathcal{L}\left(\theta^N_n\right)}_{L^2_w\left([-1,1]\right)}^2 \le C_0,
\end{equation}
where $C_0$ is a generic positive constant depending on $\theta^0$ and $S$.
\end{lm}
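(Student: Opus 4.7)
The plan is a discrete energy estimate obtained by testing the update equation in~\eqref{eq:fully} against $\theta^N_n$ in the $L^2_w$ inner product. Writing the scheme as $\theta^N_n - \theta^N_{n-1} = \Delta t\, P_N(\mathcal{L}(\theta^N_n) + S)$ and using that $\theta^N_n \in S_N$, the orthogonality relation~\eqref{eq:orthogonal} makes the projector disappear on the right-hand side, i.e., $(P_N g, \theta^N_n)_w = (g, \theta^N_n)_w$. Applying the polarization identity $2(a-b, a) = \|a\|^2 - \|b\|^2 + \|a-b\|^2$ in $L^2_w$ then produces the key algebraic identity
\begin{equation*}
\|\theta^N_n\|_{L^2_w([-1,1])}^2 - \|\theta^N_{n-1}\|_{L^2_w([-1,1])}^2 + \|\theta^N_n - \theta^N_{n-1}\|_{L^2_w([-1,1])}^2 = 2\Delta t\, \bigl(\mathcal{L}(\theta^N_n) + S, \theta^N_n\bigr)_w.
\end{equation*}

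Next I would establish a uniform bound $\|\mathcal{L}(\theta^N_n)\|_{L^2_w([-1,1])} \le C_1$, independent of $n$ and $N$. Lemma~\ref{lm:maxprinc} furnishes an $L^\infty$ bound on the continuous solution, and the same comparison argument transferred to the implicit-in-$\mathcal{L}$ scheme~\eqref{eq:fully} yields $\|\theta^N_n\|_{L^\infty} \le M$ uniformly in $n, N$. Consequently, $K$ and $H$, viewed as functions of $z$ through $\theta^N_n$, are uniformly bounded and uniformly Lipschitz. Observing in addition that $\varphi_\delta(z'-z)/|z'-z|$ is supported away from the diagonal $z'=z$ by~\eqref{eq:distributedInfluenceFunction}, so there is no singularity to worry about, the explicit computation underlying Lemma~\ref{lm:LinL2} produces the claimed uniform bound on $\mathcal{L}(\theta^N_n)$. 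This already controls the third term on the left of~\eqref{eq:stability}, since $\Delta t \sum_{n=1}^m \|\mathcal{L}(\theta^N_n)\|_{L^2_w([-1,1])}^2 \le T C_1^2$.

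For the remaining two contributions, I would bound the right-hand side of the energy identity via Cauchy-Schwarz and Young's inequality,
\begin{equation*}
2\Delta t\,\bigl|(\mathcal{L}(\theta^N_n) + S, \theta^N_n)_w\bigr| \le \Delta t\,\bigl(C_1 + \|S\|_{L^2_w([-1,1])}\bigr)^2 + \Delta t\,\|\theta^N_n\|_{L^2_w([-1,1])}^2,
\end{equation*}
and then sum from $n=1$ to $m$ to telescope the first two terms on the left. Using~\eqref{eq:ICassumption} to control $\|\theta^N_0\|_{L^2_w([-1,1])}$ in terms of $\|\theta^0\|_{L^2_w([-1,1])}$ and the discrete Gronwall inequality (valid for $\Delta t$ sufficiently small) then delivers $\|\theta^N_m\|_{L^2_w([-1,1])}^2 \le C_3$ uniformly in $m \le N_T$; substituting back into the summed energy identity yields the desired bound on $\sum_{n=1}^m \|\theta^N_n - \theta^N_{n-1}\|_{L^2_w([-1,1])}^2$. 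Assembling the three contributions produces~\eqref{eq:stability}, with $C_0$ depending only on $\theta^0$, $S$, the horizon $\delta$, and the Lipschitz constants of $K$ and $H$.

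The main obstacle I anticipate is the uniform-in-$n, N$ control of $\|\mathcal{L}(\theta^N_n)\|_{L^2_w([-1,1])}$, which in turn hinges on a discrete counterpart of Lemma~\ref{lm:maxprinc} adapted to scheme~\eqref{eq:fully}. Once this $L^\infty$-control is secured, the remainder is a textbook energy/Gronwall argument, and the spectral projector $P_N$ never interferes with the computation because the test function $\theta^N_n$ itself lies in $S_N$.
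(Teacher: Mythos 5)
Your proposal follows the same route as the paper: test the scheme against $\theta^N_n$, apply the identity $2(a-b,a)=\|a\|^2-\|b\|^2+\|a-b\|^2$, sum over $n$, and absorb the right-hand side. In several places you are actually more careful than the printed proof: you use the orthogonality~\eqref{eq:orthogonal} to remove $P_N$ exactly (legitimate, since $\theta^N_n\in S_N$), where the paper instead splits $P_N\mathcal{L}=(P_N\mathcal{L}-\mathcal{L})+\mathcal{L}$ and invokes Lemma~\ref{lm:sobolev}; and you close the estimate with Young's inequality plus a discrete Gronwall argument, where the paper jumps directly to the bound $\|\theta^N_0\|^2+2\Delta t(C/N+1)$ --- a step that silently presupposes $\sum_n\|\theta^N_n\|^2$ is already bounded (and whose ``Cauchy inequality'' produces products of squared norms rather than products of norms). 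So your skeleton is the right one and, where it differs, it is an improvement.

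The genuine gap is the one you flag yourself: the uniform bound $\|\mathcal{L}(\theta^N_n)\|_{L^2_w}\le C_1$ independent of $n$ and $N$. You propose to obtain it by transferring the comparison argument of Lemma~\ref{lm:maxprinc} to the discrete scheme, but that transfer is not routine: the Chebyshev projection $P_N$ does not preserve sign or order (truncated Chebyshev expansions overshoot near steep gradients), so the usual maximum-principle argument does not survive the projection, and the scheme~\eqref{eq:fully} is moreover implicit in $\mathcal{L}(\theta^N_n)$. Without that $L^\infty$ control, $\mathcal{L}(\theta^N_n)$ depends nonlinearly on $\theta^N_n$ through $K$ and $H$, and bounding it requires exactly the a priori control on $\theta^N_n$ that the lemma is trying to establish --- a circularity. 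To be fair, the paper's own proof does not resolve this either: Lemma~\ref{lm:LinL2} only gives $\mathcal{L}(\theta^N_m)\in L^2_w$ for each fixed $m$, with no uniformity, and the last line of~\eqref{eq:sumestimate} assumes the very bounds being proved. If $K$ and $H$ are globally bounded and Lipschitz (as they are for the Van Genuchten--Mualem relations on the physical range $[\theta_r,\theta_S]$), the uniform bound on $\mathcal{L}$ follows directly from the computation in Lemma~\ref{lm:LinL2} with no maximum principle needed, and your Gronwall argument then closes cleanly; stating that hypothesis explicitly is the honest way to repair both your proposal and the paper's proof.
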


\begin{proof}
Let $\varphi^N_{n} = 2 \theta^N_n$. We consider the inner product with $\varphi^N_n$ in~\eqref{eq:fully}:
\begin{equation}
\label{eq:weakfully}
\frac{2}{\Delta t}\left(\theta^N_n-\theta^N_{n-1},\theta^N_n\right) = 2\left(P_N\mathcal{L}\left(\theta^N_n\right),\theta^N_n\right) + 2\left(P_NS,\theta^N_n\right).
\end{equation}
Since $2\left(a-b,a\right)=a^2-b^2+\left(a-b\right)^2$, we have
\begin{align*}
    \norm{\theta^N_n}_{L^2_w\left([-1,1]\right)}^2-\norm{\theta^N_{n-1}}_{L^2_w\left([-1,1]\right)}^2&+\norm{\theta^N_n-\theta^N_{n-1}}_{L^2_w\left([-1,1]\right)}^2\\
    &\quad =2\Delta t\left(P_N\mathcal{L}\left(\theta^N_n\right),\theta^N_n\right) +2\Delta t \left(P_NS,\theta^N_n\right).
\end{align*}
Adding over $n=1\ldots,m$, and using Cauchy inequality, Lemma~\ref{lm:LinL2} and Lemma~\ref{lm:sobolev}, we find
\begin{equation}
\label{eq:sumestimate}
\begin{split}
\norm{\theta^N_m}_{L^2_w\left([-1,1]\right)}^2 &+ \sum_{n=1}^m \norm{\theta^N_n-\theta^N_{n-1}}_{L^2_w\left([-1,1]\right)}^2 \\
&= \norm{\theta^N_0}_{L^2_w\left([-1,1]\right)} +
2\Delta t\sum_{n=1}^m\left(P_N\mathcal{L}\left(\theta^N_n\right),\theta^N_n\right) + 2\Delta t\sum_{n=1}^m\left(P_NS,\theta^N_n\right)\\
&\le \norm{\theta^N_0}_{L^2_w\left([-1,1]\right)} + 2\Delta t \norm{P_N S}_{L^2_w\left([-1,1]\right)}^2 \sum_{n=1}^m \norm{\theta^N_n}_{L^2_w\left([-1,1]\right)}^2\\
&\quad +2\Delta t \sum_{n=1}^m \norm{P_N\mathcal{L}\left(\theta^N_n\right)-\mathcal{L}\left(\theta^N_n\right)}_{L^2_w\left([-1,1]\right)}^2\norm{\theta^N_n}_{L^2_w\left([-1,1]\right)}^2\\
&\quad +2\Delta t \sum_{n=1}^m \norm{\mathcal{L}\left(\theta^N_n\right)}_{L^2_w\left([-1,1]\right)}^2 \norm{\theta^N_n}_{L^2_w\left([-1,1]\right)}^2\\
&\le \norm{\theta^N_0}_{L^2_w\left([-1,1]\right)}^2+ 2\Delta t\left(\frac{C}{N}+1\right)\le C_0,
\end{split}
\end{equation}
that proves the claim.
\end{proof}

\begin{lm}
    If $\theta^N_n$ satisfies the stability condition of Lemma~\ref{lm:stability}, then it is the unique solution to the weak formulation~\eqref{eq:fully}.
\end{lm}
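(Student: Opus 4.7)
The plan is to establish uniqueness by an energy estimate on the difference of two candidate solutions. I would suppose that $\theta^N_n$ and $\tilde\theta^N_n$ both solve the fully-discrete scheme~\eqref{eq:fully} starting from the common initial datum $P_N\theta^0_0$, and that both satisfy the stability bound~\eqref{eq:stability}. Setting $w^N_n \udef \theta^N_n - \tilde\theta^N_n$, subtraction of the two schemes yields $w^N_0 = 0$ together with the recurrence
$$
w^N_n - w^N_{n-1} = \Delta t\, P_N\bigl(\mathcal{L}(\theta^N_n) - \mathcal{L}(\tilde\theta^N_n)\bigr).
$$

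Next, I would test this identity against $2 w^N_n$ in the weighted inner product $(\cdot,\cdot)_w$ and apply the algebraic identity $2(a-b,a) = \norm{a}^2 - \norm{b}^2 + \norm{a-b}^2$, exactly as in the proof of Lemma~\ref{lm:stability}. This produces
$$
\norm{w^N_n}_{L^2_w}^2 - \norm{w^N_{n-1}}_{L^2_w}^2 + \norm{w^N_n - w^N_{n-1}}_{L^2_w}^2 = 2\Delta t\,\bigl(P_N(\mathcal{L}(\theta^N_n) - \mathcal{L}(\tilde\theta^N_n)),\, w^N_n\bigr)_w.
$$
The right-hand side is then controlled by Cauchy--Schwarz, the $L^2_w$-contractivity of $P_N$, and the Lipschitz continuity of the peridynamic operator $\mathcal{L}$ already exploited in the proof of Theorem~\ref{th:convergence-semi}; the associated Lipschitz constant is uniform in $n$ thanks to Lemma~\ref{lm:maxprinc} and the stability estimate~\eqref{eq:stability}. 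After discarding the nonnegative increment term one arrives at
$$
\norm{w^N_n}_{L^2_w}^2 - \norm{w^N_{n-1}}_{L^2_w}^2 \le C\Delta t\, \norm{w^N_n}_{L^2_w}^2.
$$

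Summing from $n=1$ to $m$ and using $w^N_0=0$ gives $\norm{w^N_m}_{L^2_w}^2 \le C\Delta t \sum_{n=1}^m \norm{w^N_n}_{L^2_w}^2$; provided $\Delta t$ is small enough that $C\Delta t < 1$, absorbing the $n=m$ contribution on the left and invoking the discrete Gronwall inequality forces $\norm{w^N_m}_{L^2_w} = 0$ for every $m=1,\dots,N_T$. This gives $\theta^N_n = \tilde\theta^N_n$ for all $n$ and therefore uniqueness of the solution to~\eqref{eq:fully} within the class stabilised by~\eqref{eq:stability}.

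The main obstacle I foresee is the rigorous derivation of the $L^2_w$-Lipschitz bound for $\mathcal{L}$ with a constant depending only on the data. The operator $\mathcal{L}$ depends nonlinearly on $\theta$ through $K$ and $H$, so the estimate has to combine the uniform bound provided by the nonlocal maximum principle of Lemma~\ref{lm:maxprinc} with the local Lipschitz continuity of $K$ and $H$, while handling the weak singularity of the kernel $\varphi_\delta(z'-z)/|z'-z|$ at $z'=z$, which is integrable thanks to the definition~\eqref{eq:distributedInfluenceFunction} of the distributed influence function. A secondary, more technical point is the smallness requirement on $\Delta t$ needed to close the Gronwall step, which should appear as a CFL-type restriction in the final statement.
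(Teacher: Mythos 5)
Your argument is correct in outline, but it follows a genuinely different route from the paper. The paper does not estimate the difference of two candidate solutions at all: it rewrites one time step of~\eqref{eq:fully} in weak form, defines the form $G(\theta^N_n,\varphi^N) = \frac{1}{\Delta t}(\theta^N_n,\varphi^N) - (P_N\mathcal{L}(\theta^N_n),\varphi^N)$, asserts that $G$ is continuous and coercive (invoking the orthogonality of $P_N$ and Lemma~\ref{lm:LinL2}), and concludes uniqueness by a Lax--Milgram-type argument at each time level. Your approach instead subtracts two solution sequences, tests against $2w^N_n$, and closes with the Lipschitz continuity of $\mathcal{L}$ plus discrete Gronwall. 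Each route buys something: the paper's argument is shorter and step-local, but it leans on calling $G$ a \emph{bilinear} form even though $\mathcal{L}$ depends nonlinearly on $\theta$ through $K$ and $H$, and it leaves the coercivity constant (which must still dominate the Lipschitz constant of $\mathcal{L}$, i.e.\ an implicit smallness condition on $\Delta t$) unexamined; your energy/Gronwall argument makes that time-step restriction explicit and handles the nonlinearity honestly, at the price of needing the $L^2_w$-Lipschitz bound for $\mathcal{L}$ with a constant uniform over the bounded set supplied by Lemma~\ref{lm:maxprinc} --- a bound the paper itself takes for granted when citing the semi-discrete convergence proof, so your reliance on it is consistent with the paper's framework. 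Note also that your CFL-type condition $C\Delta t<1$ does not appear in the lemma's statement, but an analogous condition is implicitly required for the paper's coercivity claim as well, so this is a discrepancy with the statement rather than a flaw specific to your proof.
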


\begin{proof}
    For any $\varphi^N\in S_N$, considering the inner product with $\varphi^N$ in~\eqref{eq:fully}, we have
    \begin{equation}
    \label{eq:weakrewritten}
    \frac{1}{\Delta t} \left(\theta^N_n,\varphi^N \right) = \left(P_N \mathcal{L}\left(\theta^N_n\right),\varphi^N\right) + \left(P_N S,\varphi^N\right) + \frac{1}{\Delta t}\left(\theta^N_{n-1},\varphi^N\right).
    \end{equation}
    Let us define the bilinear form
    \begin{equation}
    \label{eq:G}
        G\left(\theta^N_n,\varphi^N\right) \udef \frac{1}{\Delta t} \left(\theta^N_n,\varphi^N\right) -\left(P_N \mathcal{L}\left(\theta^N_n\right),\varphi^N\right).
    \end{equation}
    It is continuous and coercive thanks to the orthogonality of $P_N$ and Lemma~\ref{lm:LinL2}. Therefore, the solution attained for problem~\eqref{eq:weakrewritten} is unique.
\end{proof}

We introduce now some interpolated functions. Let $\theta^N_{\Delta t}(\cdot,t)$ be the piecewise linear continuous interpolation of the solution $\theta^N_n$, $n=1,\dots,N$ on the time interval $(t_{n-1},t_n]$, namely
\begin{equation}
\label{eq:interpolatedtheta}
\theta^N_n (\cdot,t) = \frac{t-t_{n-1}}{\Delta t}\ \theta^N_{n}(\cdot,t) + \frac{t_n-t}{\Delta t}\ \theta^N_{n-1}.
\end{equation}
Moreover, we define the piecewise constant extensions of $\theta^N_n$ and $\theta^N_{n-1}$ respectively as follows
\begin{equation}
\label{eq:constantextensiontheta}
\begin{split}
\tilde{\theta}^N_{\Delta t} (\cdot,t) &= \theta^N_n,\\
\hat{\theta}^N_{\Delta t} (\cdot, t) &= \theta^N_{n-1},
\end{split}
\end{equation}
for any $t\in (t_{n-1},t_n]$.

The next result is an a-priori stability estimate on $\theta^N_{\Delta t}$, independent on $N$ and $\Delta t$.
\begin{lm}
\label{lm:Hminus1}
    Given the sequence $\{\theta^N_{\Delta t}\}$, there exists a positive constant $C>0$ independent on $N$ and $\Delta t$ such that
    \begin{equation}
    \label{eq:dualestimate}
    \norm{\partial_t \theta^N_{\Delta t}}_{L^2\left(0,T;L^2_w\left([-1,1]\right)\right)}\le C.
    \end{equation}
\end{lm}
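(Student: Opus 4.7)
The plan is to compute $\partial_t \theta^N_{\Delta t}$ explicitly on each subinterval and then reduce the desired $L^2(0,T;L^2_w)$ bound to the stability estimate already established in Lemma~\ref{lm:stability}. From the definition~\eqref{eq:interpolatedtheta} of the piecewise linear interpolant, the time derivative on $(t_{n-1},t_n]$ equals the finite difference $d_t\theta^N_n=(\theta^N_n-\theta^N_{n-1})/\Delta t$, and the scheme~\eqref{eq:fully} identifies this quantity with $P_N\mathcal{L}(\theta^N_n)+P_N S$. So on each subinterval $\partial_t\theta^N_{\Delta t}$ is a constant-in-time element of $S_N$ whose $L^2_w$ norm is explicitly controlled by the scheme.

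Concretely, I would write
\[
\norm{\partial_t\theta^N_{\Delta t}}_{L^2(0,T;L^2_w)}^2=\sum_{n=1}^{N_T}\Delta t\,\norm{P_N\mathcal{L}(\theta^N_n)+P_N S}_{L^2_w}^2,
\]
split the right-hand side with the elementary inequality $(a+b)^2\le 2a^2+2b^2$, and bound each piece separately. For the source term, orthogonality of $P_N$ gives $\norm{P_N S}_{L^2_w}\le\norm{S}_{L^2_w}$, so its contribution is at most $2T\norm{S}_{L^2_w}^2$, which is finite by the standing assumption $S\in L^2_w([-1,1])$. For the nonlocal operator term, I would again use $\norm{P_N\mathcal{L}(\theta^N_n)}_{L^2_w}\le\norm{\mathcal{L}(\theta^N_n)}_{L^2_w}$ (using Lemma~\ref{lm:LinL2} to know the right-hand side is finite) and then invoke Lemma~\ref{lm:stability}, which directly supplies
\[
\Delta t\sum_{n=1}^{N_T}\norm{\mathcal{L}(\theta^N_n)}_{L^2_w}^2\le C_0,
\]
with $C_0$ independent of $N$ and $\Delta t$. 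Summing the two contributions yields the bound~\eqref{eq:dualestimate}.

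Since every ingredient is already in place, there is essentially no real obstacle: the argument is a bookkeeping reduction to Lemma~\ref{lm:stability} through the identity $d_t\theta^N_n=P_N\mathcal{L}(\theta^N_n)+P_N S$. The only minor subtlety worth double-checking is that the $L^2_w$-contractivity of $P_N$ is used twice (on the operator and on the source); this follows from the orthogonal projection property~\eqref{eq:orthogonal}, so there is nothing subtle to verify beyond writing it down. The result is therefore obtained by a short direct computation that packages the summation-in-time part of the stability estimate as an $L^2(0,T;L^2_w)$ bound on the discrete time derivative.
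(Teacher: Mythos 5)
Your proof is correct, and its core identity is the same one the paper relies on: on each subinterval the time derivative of the linear interpolant equals $(\theta^N_n-\theta^N_{n-1})/\Delta t$, which the scheme~\eqref{eq:fully} identifies with $P_N\mathcal{L}(\theta^N_n)+P_N S$. Where you diverge is in how the bound is then closed. The paper pairs $\partial_t\theta^N_{\Delta t}$ against a test function $\varphi^N$ and applies a Cauchy/Young-type inequality, producing terms of the form $\int(P_N\mathcal{L})^2(\varphi^N)^2$ and $\int(P_N S)^2(\varphi^N)^2$, and stops there without explaining why these are bounded uniformly in $N$ and $\Delta t$; as written, the uniform constant $C$ is never actually exhibited, and Lemma~\ref{lm:stability} is not invoked. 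You instead compute $\norm{\partial_t\theta^N_{\Delta t}}^2_{L^2(0,T;L^2_w)}$ directly as $\sum_n\Delta t\,\norm{P_N\mathcal{L}(\theta^N_n)+P_N S}^2_{L^2_w}$, use the $L^2_w$-contractivity of the orthogonal projection, and then close the estimate with the term $\Delta t\sum_n\norm{\mathcal{L}(\theta^N_n)}^2_{L^2_w}\le C_0$ that Lemma~\ref{lm:stability} provides exactly for this purpose. Your route is the more complete one: it makes explicit the dependence of $C$ on $C_0$ and $T\norm{S}^2_{L^2_w}$, and it shows why the $\mathcal{L}$-term in the stability estimate was worth carrying along. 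The only cosmetic caveat is that the paper's interpolant~\eqref{eq:interpolatedtheta} contains a typo, but your reading of it (piecewise linear in $t$ with slope $d_t\theta^N_n$) is the intended one.
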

\begin{proof}
    Cauchy inequality gives us
    \begin{align*}
        \int_0^T \int_{-1}^1 \left|\partial_t \theta^N_{\Delta t} \varphi^N\right|\de z \de t&=\int_0^T\int_{-1}^1\left|P_N\mathcal{L}\left(\theta^N_{\Delta t}\right) + P_N S\right|\left|\varphi^N\right|\de z \de t\\
        &\quad\le \frac{1}{2}\int_0^T\int{-1}^1 \left(P_N\mathcal{L}\left(\theta^N_{\Delta t}\right)\right)^2\left(\varphi^N\right)^2 \de z \de t \\
        &\quad + \frac{1}{2} \int_0^T \int_{-1}^1 \left(P_N S\right)^2 \left(\varphi^N\right)^2.
    \end{align*}
The claim is proved.
\end{proof}

Now we can prove the convergence result for the fully-discrete solution.
\begin{thm}
   There exists a function $\theta\in L^2\left(0,T;L^2_w\left([-1,1]\right)\right)$ such that, as $N\to \infty$ and $\Delta t\to 0$, there hold
   \begin{equation}
    \label{eq:convthm}
    \begin{split}
    \tilde{\theta}^N_{\Delta t},\,\hat{\theta}^N_{\Delta t},\,\theta^N_{\Delta t}\rightharpoonup\theta \qquad\text{weakly in}\quad &L^2\left(0,T;{L^2_w\left([-1,1]\right)}\right),\\
    \partial_t \theta^N_{\Delta t} \rightharpoonup \partial_t\theta\qquad\text{weakly in}\quad&L^2\left(0,T;L^2_w\left([-1,1]\right)\right),\\
    \tilde{\theta}^N_{\Delta t},\,\hat{\theta}^N_{\Delta t},\,\theta^N_{\Delta t}\to\theta \qquad\quad \qquad \text{in}\quad&L^2\left(0,T;L^q_w\left([-1,1]\right)\right),
    \end{split}
   \end{equation}
   with $1\le q\le 2$.
\end{thm}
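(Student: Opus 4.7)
The plan is to follow the standard Lions-type compactness route: pull uniform bounds out of the stability estimates proved in Lemmas~\ref{lm:stability} and~\ref{lm:Hminus1}, extract weakly convergent subsequences, identify the three piecewise interpolants as sharing a common limit $\theta$, and then upgrade to strong $L^2(0,T;L^q_w)$ convergence via an Aubin--Lions type argument.

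First I would observe that from Lemma~\ref{lm:stability} we have $\max_{1\le m\le N_T}\norm{\theta^N_m}_{L^2_w} \le C$, uniformly in $N$ and $\Delta t$. This bound transfers to the piecewise constant extensions $\tilde\theta^N_{\Delta t}$ and $\hat\theta^N_{\Delta t}$, and through the convex-combination definition~\eqref{eq:interpolatedtheta} to the piecewise affine interpolant $\theta^N_{\Delta t}$, so that all three families are bounded in $L^\infty(0,T;L^2_w([-1,1]))\hookrightarrow L^2(0,T;L^2_w([-1,1]))$. Lemma~\ref{lm:Hminus1} yields a uniform bound of $\partial_t\theta^N_{\Delta t}$ in $L^2(0,T;L^2_w([-1,1]))$. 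By reflexivity of these Hilbert spaces, up to a (not relabelled) subsequence, $\theta^N_{\Delta t}\rightharpoonup\theta$ and $\partial_t\theta^N_{\Delta t}\rightharpoonup\chi$ weakly in $L^2(0,T;L^2_w)$, and a standard integration-by-parts in time against a test function identifies $\chi=\partial_t\theta$.

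Next I would show that $\tilde\theta^N_{\Delta t}$, $\hat\theta^N_{\Delta t}$ and $\theta^N_{\Delta t}$ share the same weak limit. From the defining relations, on each interval $(t_{n-1},t_n]$ one has $\theta^N_{\Delta t}-\tilde\theta^N_{\Delta t} = \tfrac{t-t_n}{\Delta t}(\theta^N_n-\theta^N_{n-1})$ and $\theta^N_{\Delta t}-\hat\theta^N_{\Delta t} = \tfrac{t-t_{n-1}}{\Delta t}(\theta^N_n-\theta^N_{n-1})$, so
\begin{equation*}
\norm{\theta^N_{\Delta t}-\tilde\theta^N_{\Delta t}}_{L^2(0,T;L^2_w)}^2 + \norm{\theta^N_{\Delta t}-\hat\theta^N_{\Delta t}}_{L^2(0,T;L^2_w)}^2 \le C\,\Delta t \sum_{n=1}^{N_T}\norm{\theta^N_n-\theta^N_{n-1}}_{L^2_w}^2 \le C\,\Delta t,
\end{equation*}
which vanishes as $\Delta t\to 0$ by Lemma~\ref{lm:stability}. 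Hence $\tilde\theta^N_{\Delta t}$ and $\hat\theta^N_{\Delta t}$ converge weakly to the same $\theta$, establishing the first two displays in~\eqref{eq:convthm}.

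The main obstacle is the strong convergence in $L^2(0,T;L^q_w)$, since weak $L^2_w$ convergence alone does not imply strong $L^q_w$ convergence for $q<2$. I would handle it through a Lions--Aubin argument: the uniform bound on $\partial_t\theta^N_{\Delta t}$ in $L^2(0,T;L^2_w)$ gives equicontinuity in time (Kolmogorov tails in $h$ controlled by $\sqrt{h}$), while the spatial bound in $L^2_w$ combined with the finite-dimensional character of $S_N$ and Lemma~\ref{lm:sobolev} (which quantifies the projection error and hence compactness of the embedding of the relevant regularity class into $L^q_w$ for $q<2$) provides the spatial compactness. Applying the Fréchet--Kolmogorov criterion with these two ingredients, or equivalently quoting a discrete Aubin--Lions lemma, yields a subsequence converging strongly in $L^2(0,T;L^q_w([-1,1]))$ for every $1\le q<2$, and the boundary case $q=2$ follows from the uniform $L^\infty(0,T;L^2_w)$ bound upgrading the strong $L^q$ limit. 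The limit necessarily coincides with $\theta$ by uniqueness of weak limits, and by the uniqueness statement for~\eqref{eq:fully} the whole sequence (not merely a subsequence) converges.
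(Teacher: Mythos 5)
Your proposal is correct and follows essentially the same route as the paper: uniform bounds from Lemma~\ref{lm:stability} give weak compactness, the telescoping estimate $\sum_n\norm{\theta^N_n-\theta^N_{n-1}}_{L^2_w}^2\le C$ forces the three interpolants to share the same limit, and the strong convergence is obtained from the time-derivative bound of Lemma~\ref{lm:Hminus1} together with an Aubin--Lions argument. The only cosmetic difference is that you bound the differences of interpolants directly in $L^2_w$ while the paper routes through an $L^1_w$--$L^2_w$ interpolation inequality to land in $L^q_w$; both rest on the same stability estimate and reach the same conclusion.
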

\begin{proof}
    Lemma~\ref{lm:stability} ensures that the sequences $\{\tilde{\theta}^N_{\Delta t}\}$, $\{\hat{\theta}^N_{\Delta t}\}$ and $\{\theta^N_{\Delta t}\}$ are bounded and, as a consequence, each of them admits a weak convergent subsequence.

    We prove now that these sequences (still denoted by the same way to lighten the notation) converge to the same limit $\theta$. Indeed, using the interpolation inequality, Cauchy inequality and Lemma~\ref{lm:stability} we obtain
    \begin{align*}
        \norm{\theta^N_{\Delta t}-\tilde{\theta}^N_{\Delta t}}_{L^2\left(0,T;L^q_w\left([-1,1]\right)\right)}^2 &\le \Delta t \sum_{n=1}^m \norm{\theta^N_{n}-\theta^N_{n-1}}_{L^q_w\left([-1,1]\right)}^2\\
        &\le \Delta t \sum_{n=1}^m \norm{\theta^N_{n}-\theta^N_{n-1}}_{L^1_w\left([-1,1]\right)}^{2\alpha} \norm{\theta^N_{n}-\theta^N_{n-1}}_{L^2_w\left([-1,1]\right)}^{2-2\alpha}\\
        &\le C \left(\Delta t\right)^{\alpha}\left(\sum_{n=1}^m \norm{\theta^N_{n}-\theta^N_{n-1}}_{L^2_w\left([-1,1]\right)}^{2} \right)^{\alpha} \\
        &\quad \left(\Delta t\sum_{n=1}^m \norm{\theta^N_{n}-\theta^N_{n-1}}_{L^2_w\left([-1,1]\right)}^{2}\right)^{1-\alpha} \\
        &\stackrel{\Delta t \to 0}{\longrightarrow} 0,
    \end{align*}
    where $\alpha=\frac{2-q}{q}$. Similarly, we find
    \begin{equation*}
        \norm{\theta^N_{\Delta t}-\hat{\theta}^N_{\Delta t}}_{L^2\left(0,T;L^q_w\left([-1,1]\right)\right)}^2 \stackrel{\Delta t \to 0}{\longrightarrow} 0.
    \end{equation*}

    Finally, these convergences are strong in $L^2\left(0,T;L^q_w\left([-1,1]\right)\right)$ thanks to Aubin-Lions Lemma and Lemma~\ref{lm:Hminus1}.
\end{proof}

\section{Numerical Simulations}
\label{sec:tests}

In this section we test our proposed method on different soils with different initial conditions: in Example \ref{ex:1} we use a function with a discontinuity in its first derivative; in Example \ref{ex:2} we use a periodic function. Moreover, in both cases a sink forcing term $S(z)$ is active as in \eqref{eq:model}, representing the water uptake due to root systems.

Also, we consider the classical Van Genuchten-Mualem constitutive relations in the unsaturated zone, given by
\begin{align*}
\theta\left( h_m \right) &= \theta_r + \frac{\theta_S - \theta_r}{\left( 1 + |\alpha h_m|^n \right)^m}, \quad m \udef 1- \frac{1}{n}, \\
K(h_m) &= K_S  \left[ \frac{1}{1 + |\alpha h_m|^n} \right]^{\frac{m}{2}} \left[ 1 - \left(1 - \frac{1}{1 + |\alpha h_m|^n} \right)^m\right]^2,
\end{align*}
where $\theta_r$ and $\theta_S$ represent the residual and the saturated water content, respectively, $K_S$ the saturated hydraulic conductivity, and $\alpha,\, n$
are fitting parameters. Moreover, according to Remark~\ref{rem:affineTransformation}, we perform our simulations in the spatial domain $[0,Z]$.
\begin{exm}\label{ex:1}
\begin{figure}
    \centering
    \includegraphics[width=0.7\textwidth]{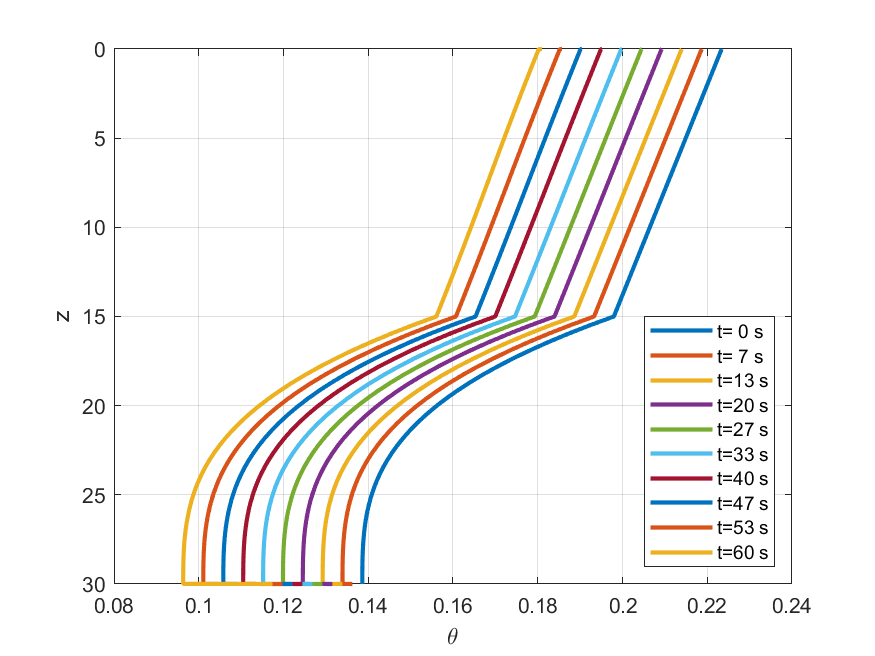}
    \caption{With reference to Example~\ref{ex:1}, the profile of the soil moisture for different time values. The parameters of the simulations are $N=100$, $\Delta t=0.06$ s and $\delta=0.15$.}
    \label{fig:ex1}
\end{figure}

As in~\cite{Berardi_Difonzo_Notarnicola_Vurro_APNUM_2019,Haverkamp}, we consider a sand with parameters
\[
\theta_r=0.075,\,\theta_S=0.287,\,\alpha=0.036,\,n=1.56,\,K_S=0.00094\,\textrm{cm/s}.
\]
We added a sink term $S=-700\,\textrm{s}^{-1}$ and parameter $\delta=0.15$ in \eqref{eq:distributedInfluenceFunction}. We set our initial and boundary conditions as follows
\begin{align*}
\theta(0,t) &= 0.2234\left(1-\frac{t}{T}\right)+0.1810\frac{t}{T},\,\,t\in[0,T], \\
\theta(Z,t) &= 0.1386\left(1-\frac{t}{T}\right)+0.1174\frac{t}{T},\,\,t\in[0,T],
\end{align*}
while initial condition is defined as
\[
\theta(z,0)=
\begin{cases}
0.1386+0.0594(x+1), & x\in[-1,0], \\
0.2234+0.0254(x-1), & x\in[0,1],
\end{cases}
\quad x\udef\frac{Z-2z}{Z},\,\,z\in[0,Z],
\]
showing a discontinuity in the first derivative at $z=\frac{Z}{2}$. \\
We select $Z=30$ cm, $T=60$ s; moreover, we used $\Delta t=0.06$ s and $N=100$. Results are shown in Figure \ref{fig:ex1}.
\end{exm}

\begin{exm}\label{ex:2}
\begin{figure}
    \centering
    \includegraphics[width=0.7\textwidth]{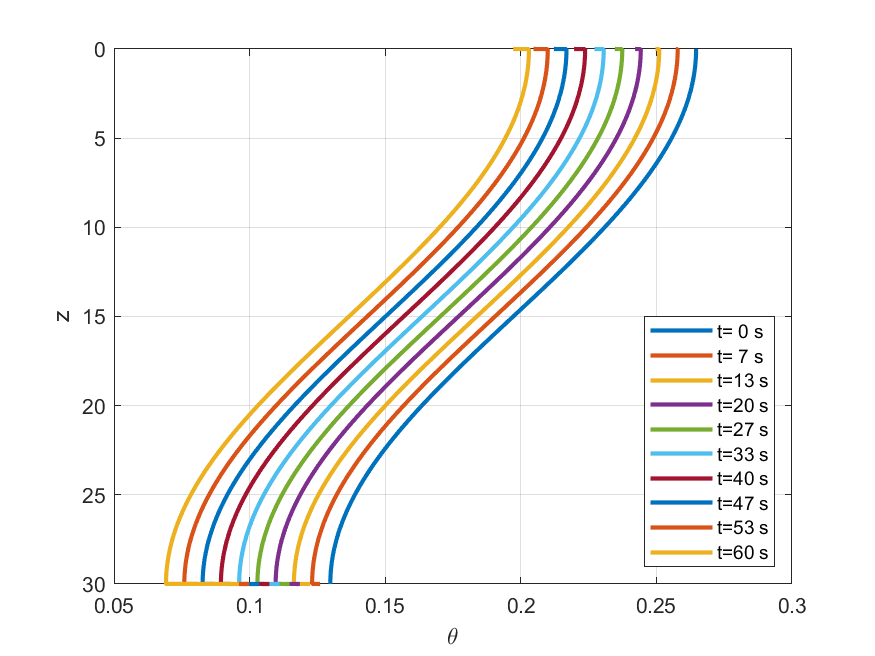}
    \caption{With reference to Example~\ref{ex:2}, the profile of the soil moisture for different time values. The parameters of the simulations are $N=100$, $\Delta t=0.06$ s and $\delta=0.15$.}
    \label{fig:ex2}
\end{figure}

As in \cite{Hills_et_al_1989}, we consider a Hills Berino loamy fine sand with parameters
\[
\theta_r=0.0286,\,\theta_S=0.3658,\,\alpha=0.028,\,n=2.2390,\,K_S=0.0063\,\textrm{cm/s}.
\]
We added a sink term $S=-1000\,\textrm{s}^{-1}$ and parameter $\delta=0.15$ in \eqref{eq:distributedInfluenceFunction}. We set our initial and boundary conditions as follows
\begin{align*}
\theta(0,t) &= 0.2646\left(1-\frac{t}{T}\right)+0.1972\frac{t}{T},\,\,t\in[0,T], \\
\theta(Z,t) &= 0.1298\left(1-\frac{t}{T}\right)+0.0960\frac{t}{T},\,\,t\in[0,T],
\end{align*}
while initial condition is defined as the periodic function
\[
\theta(z,0)=-0.0674\cos\left(\frac{x+1}{2}\pi\right)+0.1972,\,\,x\udef\frac{Z-2z}{Z},\,\,z\in[0,Z],
\]
We select $Z=30$ cm, $T=60$ s; moreover, we used $\Delta t=0.06$ s and $N=100$. Results are shown in Figure \ref{fig:ex2}.
\end{exm}

\section{Conclusions}
\label{sec:conclusions}

We have studied a fully-discrete spectral scheme for a nonlocal formulation of Richards' equation based on the peridynamic theory. We prove the convergence of the method to the unique weak solution to the problem as the timestep size tends to zero and the total number of collocation points used for the discretization of the spatial domain goes to infinity. The proof is based on the fact that the numerical approximation of the solution satisfies the stability and the compactness properties. Finally, we have given some simulations to show a numerical verification of the existence of weak solution to our model.

The present work suggests several possible directions for future and already ongoing research studies. In particular, it would be of interest study the convergence of the scheme when we reduce the regularity of the initial conditions to a Radon measure (see for instance~\cite{LIMICROGINA2010}).
Moreover, we plan to construct a generalization of the model to 2D in order to represent and to study the evolution of desiccation cracks implicitly incorporated into the model. To do this, in order to avoid the Gibbs' phenomenon near discontinuities, we would investigate the implementation of a filtering strategy coupled with the Chebyshev spectral discretization as in~\cite{Pellegrino2023}.

\section*{Acknowledgments}

FVD has been supported by \textit{REFIN} Project, grant number 812E4967 funded by Regione Puglia.

SFP has been supported by \textit{REFIN} Project, grant number D1AB726C funded by Regione Puglia, and by \textit{PNRR MUR - M4C2} project, grant number N00000013 - CUP D93C22000430001.

The two authors gratefully acknowledge the support of INdAM-GNCS 2023 Project, grant number CUP$\_$E53C22001930001.

\bibliographystyle{plain}
\bibliography{biblio.bib}

\end{document}